\newcommand*\circled[1]{\tikz[baseline=(char.base)]{\node[shape=circle,draw,inner sep=2pt] (char) {#1};}}
\newtheorem{theo}{Theorem}[section]
\newtheorem{lemm}[theo]{Lemma}
\newtheorem{rema}[theo]{Remark}
\newtheorem{ex}[theo]{Example}
\title{\bf A Murnaghan--Nakayama Rule for Grothendieck Polynomials of Grassmannian Type}
\author{Duc-Khanh Nguyen, Dang Tuan Hiep, Tran Ha Son, Do Le Hai Thuy}
\date{}
\newfont{\gothic}{eufb10}
\begin{document}
\maketitle
\begin{abstract} 
We consider the Grothendieck polynomials appearing in the $K$-theory of Grassmannians, which are analogs of Schur polynomials. This paper aims to establish a version of the Murnaghan--Nakayama rule for Grothendieck polynomials of the Grassmannian type. This rule allows us to express the product of a Grothendieck polynomial with a power-sum symmetric polynomial into a linear combination of other Grothendieck polynomials.  
\end{abstract}
\textit{\\2020 Mathematics Subject Classification.} 05E05; 14M15; 19E08.\\ 
\textit{Keywords and phrases.} Murnaghan--Nakayama rule; Grothendieck polynomials; $K$-theory of Grassmannians.

\section{Introduction}

The $K$-theory of flag varieties was studied by Kostant and Kumar \cite{kostant1987t}, and by Demazure \cite{demazure1974desingularisation}. Lascoux and Schutzenberger introduced the Grothendieck polynomials as representatives for the structure sheaves of the Schubert varieties of a flag variety \cite{gherardelli2006invariant, lascoux1982structure, lascoux2007anneau}. For any permutation $w \in \bigcup_{m \geq 1}S_m$, the Grothendieck polynomial   $\mathfrak{G}_w:= \mathfrak{G}_w(x_1, x_2, \dots)$ is defined by isobaric divided difference operators. Fomin and Kirilov studied
 combinatorics of these polynomials in \cite{fomin1996yang, fomin1994grothendieck}.\\

Let $s_\lambda$ be the Schur function associated with a partition $\lambda$, and $p_k$ be the power-sum symmetric functions of degree $k$ \cite{macdonald1991notes}. The classical Murnaghan--Nakayama rule describes the decomposition of the product $s_\lambda p_k$ into a sum of Schur functions \cite{macdonald1991notes} as follows. We have  
\begin{equation*}
s_\lambda p_k = \sum_{\mu}  (-1)^{r(\mu/\lambda)+1} s_\mu,    
\end{equation*}
where the sum runs over all partitions $\mu$ such that $\mu/\lambda$ is a ribbon of size $k$ and $r(\mu/\lambda)$ is the number of rows of skew shape $\mu/\lambda$.\\

The classical Murnaghan--Nakayama rule plays an important role in the representation theory of symmetric groups. It gave a formula for the character table \cite{nakayama1940someII,nakayama1940someI,murnaghan1937characters}. Because of the classical story, many extensions and generalizations of the classical Murnaghan--Nakayama rule were studied. Indeed, a version for non-commutative symmetric functions is given by Fomin and Green in \cite{fomingreen1998} (it led to formulas for characters of representations associated with stable Schubert and Grothendieck polynomials). A skew version and its generalization of multiplication with quantum power-sum function are given by \cite{konvalinka2012skew,assafmcnamara2011}. A version for non-commutative Schur functions can be found in \cite{tewari2016murnaghan}. A plethystic version is given by \cite{wildon2016combinatorial}. A version for loop Schur functions is given by \cite{ross2014loop} (it provides a fundamental step in the orbifold Gromov--Witten/Donaldson--Thomas correspondence in \cite{ross2013gerby}). A version in the cohomology of an affine Grassmannian can be found in \cite{bandlow2011murnaghan}. An extended version of Schubert polynomials and the quantum cohomology of Grassmannians can be found in \cite{morrison2018two}.\\

In this paper, we restrict our attention to the simplest complex flag variety: the Grassmann variety of $n$ dimensional subspaces of $\mathbb{C}^{n+m}$. The operator-definition of $\mathfrak{G}_{w}$ reduces in the special case of Grassmannian permutations of descent $n$ to symmetric polynomials in $n$ variables indexed by partitions $\lambda$ with at most $n$ parts, commonly relabeled as $G_\lambda$ and called Grassmannian Grothendieck polynomials. These $G_\lambda$ are $K$-theoretic analogs of Schur functions, representing the $K$-theory of the Grassmannian variety of $n$ dimensional subspaces of $\mathbb{C}^{n+m}$. Like the Schur functions, they are given by a number of formulas (see Sect. \ref{Grothendieck_poly_Gr}). For example
\begin{equation*}
    G_\lambda(x_1,\dots,x_n) = \frac{\det (x_i^{\lambda_j + n - j}(1 +\beta x_i)^{j-1})_{n \times n}}{ \prod_{1 \leq i <j \leq n} (x_i-x_j)},
\end{equation*}
where $\beta$ is a formal parameter. Recall that, if $\beta=0$, then $G_\lambda$ is identified with the Schur function $s_\lambda(x_1, \dots, x_n)$. The products of $G_\lambda$ with other special symmetric polynomials $e_k,h_k$ are mentioned in \cite{lenart2000combinatorial}, in which Lenart studied the Pieri rules of the Grassmannian Grothendieck polynomials. Our work on the product $G_\lambda p_k$ can be considered as a $K$-theoretic version of the classical Murnaghan--Nakayama rule.\\

Let $\lambda$ and $\mu$ be partitions of length at most $n$ and $\lambda \leq \mu$. Let $|\mu/\lambda|, c(\mu/\lambda), r(\mu/\lambda)$ be the size, number of columns, number of rows of the skew shape $\mu/\lambda$. We say two boxes in a skew shape are {adjacent} whenever they share an edge, and we say a skew shape $\mu/\lambda$ is {connected} whenever every pair of its boxes is connected by a sequence of adjacent boxes. A {ribbon} is a connected skew shape with no $2\times2$ square. When $\mu/\lambda$ is connected, the {maximal ribbon along the northwest border of $\mu/\lambda$} is the ribbon $\nu/\lambda$ of size as max as possible, contained in $\mu/\lambda$.\\

The main result of this paper is stated as follows. 
\begin{theo}\label{main}
For any partition $\lambda$ of length at most $n$ and $k \in \mathbb{Z}_{>0}$, we have
\begin{equation*}\label{main1-equation}
G_\lambda p_k = \sum_{\mu}(-\beta)^{|\mu/\lambda|-k}(-1)^{k-c(\mu/\lambda)}\binom{r(\mu/\lambda)-1}{k-c(\mu/\lambda)}G_{\mu},
\end{equation*}
where the sum runs over all partitions $\mu$ of length at most $n$, $\mu \geq \lambda$ such that $c(\mu/\lambda) \leq k$, $\mu/\lambda$ is connected and the maximal ribbon along its northwest border has size at least $k$. 
\end{theo}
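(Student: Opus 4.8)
\emph{Strategy.} The plan is to push the classical bialternant proof of the Murnaghan--Nakayama rule through the Ikeda--Naruse identity, keeping track of the extra data produced by the factors $(1+\beta x_i)^{j-1}$. For a vector $\alpha=(\alpha_1,\dots,\alpha_n)\in\mathbb{Z}_{\ge 0}^{n}$ set $D_\alpha:=\det\!\bigl(x_i^{\alpha_j}(1+\beta x_i)^{\,j-1}\bigr)_{1\le i,j\le n}$, so that with $\delta=(n-1,n-2,\dots,1,0)$ the Weyl identity reads $G_\lambda=D_{\lambda+\delta}/D_\delta$ and $D_\delta=\prod_{1\le i<j\le n}(x_i-x_j)=:\Delta$. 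Since $p_k=x_1^k+\cdots+x_n^k$, one expands $p_k D_{\lambda+\delta}$ by the Leibniz rule and reindexes each resulting term by the column $\sigma(i_0)$ into which the extra factor $x_{i_0}^k$ (coming from the variable $x_{i_0}$ in $p_k$) is absorbed; since this operation does not touch any power $(1+\beta x_i)^{\sigma(i)-1}$, it yields
\[
p_k\,G_\lambda=\frac{1}{\Delta}\sum_{s=1}^{n}D_{\lambda+\delta+k\epsilon_s},
\]
$\epsilon_s$ denoting the $s$-th coordinate vector. This is the exact analogue of the classical identity $s_\lambda p_k=\sum_s a_{\lambda+\delta+k\epsilon_s}/a_\delta$.

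\emph{The straightening lemma.} The heart of the proof is to expand each summand $D_{\lambda+\delta+k\epsilon_s}/\Delta$ in the basis $\{G_\mu\}$. When $\beta=0$ this is the classical straightening: $a_{\lambda+\delta+k\epsilon_s}$ vanishes once the boosted exponent collides with another, and otherwise sorting the exponents back into strictly decreasing order produces $\pm a_\delta\,s_\mu$. Two new features appear for $\beta\neq 0$. First, sliding the boosted column leftwards into sorted position (with the usual sign $(-1)^{s-p}$) carries its factor $(1+\beta x_i)^{s-1}$ along, so the powers of $(1+\beta x_i)$ in the reordered determinant are the sequence $0,1,\dots,n-1$ subjected to a cyclic shift on the block of columns that were crossed. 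Second, a coincidence of exponents no longer forces the determinant to vanish, since the two colliding columns carry different powers of $(1+\beta x_i)$. Both are handled by the single relation
\[
x^{a}(1+\beta x)^{t}=x^{a}(1+\beta x)^{t-1}+\beta\,x^{a+1}(1+\beta x)^{t-1}
\]
applied in an appropriate column, together with multilinearity: each use of it either decreases the total mismatch between column index and $(1+\beta x)$-power by one --- eventually producing a genuine $D_\mu$ or a determinant with two equal columns --- or produces a $(-\beta)$-multiple of a determinant in which one exponent has grown by one, i.e.\ in which the underlying skew shape has acquired a box. By induction on the mismatch (and on $n$, via Laplace expansion along the boosted column) this process terminates and writes $D_{\lambda+\delta+k\epsilon_s}$ as $\Delta$ times an explicit finite combination of powers of $-\beta$ times Grothendieck polynomials $G_\mu$; crucially the process is \emph{directed}, an exponent that has been passed over being never increased again.

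\emph{Assembling the rule.} Summing over $s$ and collecting the coefficient of a fixed $G_\mu$: connectedness of $\mu/\lambda$ is automatic, because the boosted column only ever interacts with the contiguous block of columns whose exponents lie strictly between $(\lambda+\delta)_s$ and $(\lambda+\delta)_s+k$, so a disconnected shape cannot be produced. Each straightening move adds one box and one factor $-\beta$, while the $\beta$-free part of the procedure accounts for exactly $k$ boxes, forming a size-$k$ ribbon that is a sub-ribbon of the maximal ribbon along the northwest border of $\mu/\lambda$ (recovering the classical ribbons of size $k$ when $\beta=0$); this forces $c(\mu/\lambda)\le k$ and forces that border ribbon to have size at least $k$, and makes the total power of $-\beta$ equal to $|\mu/\lambda|-k$. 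Because for a connected skew shape the maximal northwest-border ribbon meets every row and every column, its size is $r(\mu/\lambda)+c(\mu/\lambda)-1$, so the stated support conditions are exactly $0\le k-c(\mu/\lambda)\le r(\mu/\lambda)-1$, the range in which $\binom{r(\mu/\lambda)-1}{k-c(\mu/\lambda)}$ is nonzero.

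\emph{The main obstacle.} The step I expect to be hardest is to prove that the aggregate coefficient of $G_\mu$, obtained by summing over $s$ and over all admissible directed straightening histories reaching $\mu$, collapses to the single closed form $(-1)^{k-c(\mu/\lambda)}\binom{r(\mu/\lambda)-1}{k-c(\mu/\lambda)}$. I would attack this either by a sign-reversing involution on pairs of histories that differ by transposing two adjacent straightening moves, cutting the count down to the claimed binomial, or by exhibiting a weight-preserving bijection between the surviving histories and the $\bigl(k-c(\mu/\lambda)\bigr)$-element subsets of the $\bigl(r(\mu/\lambda)-1\bigr)$ row separations of the border ribbon of $\mu/\lambda$; in either case the delicate part is tracking the signs $(-1)^{s-p}$ coming from the column slides against the $(-\beta)$'s coming from the straightening moves.
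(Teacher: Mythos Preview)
Your approach is genuinely different from the paper's. The paper never touches the bialternant determinant after quoting it; instead it reduces to $\beta=1$ via the rescaling $\widetilde G_\lambda=\beta^{|\lambda|}G_\lambda(x/\beta)$, invokes Lenart's Pieri rules for $\widetilde G_\lambda e_i$ and $\widetilde G_\lambda h_{k-i}$ in terms of binary skew tableaux, and feeds these into the elementary identity $p_k=\sum_{i=0}^{k-1}(-1)^i(k-i)e_ih_{k-i}$. The coefficient of $\widetilde G_\mu$ then becomes a finite sum over pairs of tableaux, which the paper evaluates by two trivial binomial identities ($\sum_i(-1)^i\binom{m}{i}=0$ and $\sum_i(-1)^i i\binom{m}{i}=0$ for $m\ge2$); connectedness of $\mu/\lambda$ is not automatic but emerges from this cancellation, since the quantity $|\gamma|$ counting connected components is exactly the $m$ in those sums. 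What the paper buys is that the hard work was already done by Lenart, and the residual identity is a two-line calculation.

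Your bialternant-straightening route is plausible in outline---the identity $p_kG_\lambda=\Delta^{-1}\sum_s D_{\lambda+\delta+k\epsilon_s}$ is correct, and the relation $x^a(1+\beta x)^t=x^a(1+\beta x)^{t-1}+\beta x^{a+1}(1+\beta x)^{t-1}$ is indeed the natural column operation---but as written it has a real gap. You yourself flag the main obstacle: you do not compute the coefficient of $G_\mu$, only propose two possible attacks (an involution or a bijection with subsets of row-separations), neither of which you carry out. That computation is the entire content of the theorem; without it you have only re-derived the support conditions, and even there the argument that disconnected shapes never arise is asserted rather than proved (in the paper's approach disconnected shapes \emph{do} arise in intermediate sums and must be shown to cancel). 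The straightening lemma is also only sketched: ``induction on the mismatch'' and ``the process is directed'' need precise statements and proofs before the histories you want to count or cancel are even well-defined. If you want to push this route through, the paper's proof suggests that the eventual coefficient calculation should collapse to something no harder than $\sum_j(-1)^{c-j}j\binom{|\gamma|}{c-j}$, which vanishes unless $|\gamma|=1$; you might look for an analogue of $|\gamma|$ in your straightening picture.
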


This paper is organized as follows. In Sect. \ref{pre} we recall the basic knowledge related to symmetric polynomials, partitions, diagrams, binary tableaux, Grothendieck polynomials of Grassmannian type. In Sect. \ref{proof} we prove our main result.\\

\textbf{Acknowledgments:} This work is supported by the Ministry of Education and Training, Vietnam, under project code B2022-CTT-02: ``Study some combinatorial models in Representation Theory", 2022-2023 (Decision No.1323/QD-BGDDT, May 19, 2022). Khanh would like to express his sincere gratitude for the Visiting Fellowship supported by MathCoRe and Prof. Petra Schwer at Otto-von-Guericke-Universität Magdeburg. He would also like to thank Prof. Cristian Lenart for his strong encouragement and for taking his time to read the manuscript and give valuable comments. Hiep would like to thank Prof. Takeshi Ikeda for introducing Grothendieck polynomials and explaining their importance in the study of $K$-theory of Grassmannians. Thuy was partially supported by the Vietnam Academy of Science and Technology under Grant Number DLTE00.04/23-24. 

\section{Preliminaries}\label{pre}
\subsection{Symmetric Polynomials}
A polynomial $f(x_1,\dots,x_n)$ in $n$ variables is said to be {\em symmetric} if for all permutations $\sigma \in S_n$, then we have
\[f(x_{\sigma(1)},\dots,x_{\sigma(n)}) = f(x_1,\dots,x_n).\]
There are fundamental symmetric polynomials: The $k$-th {\em elementary symmetric polynomial}
\[e_k = \sum_{1\leq i_1<\cdots<i_k\leq n}x_{i_1}\cdots x_{i_k},\]
the $k$-th {\em complete homogeneous symmetric polynomial}
\[h_k = \sum_{1\leq i_1\leq \cdots\leq i_k\leq n}x_{i_1}\cdots x_{i_k},\]
and the $k$-th {\em power-sum symmetric polynomial}
\[p_k = \sum_{i=1}^n x_i^k.\]
Let $k$ be a positive integer. The following formula is key to the proof of Theorem \ref{main}.  
\begin{lemm}
\begin{equation}\label{p}
p_k = \sum_{i = 0}^{k-1}(-1)^{i}(k-i)e_ih_{k-i}.
\end{equation}
\end{lemm}
\begin{proof}
The proof of the equality (\ref{p}) is as follows. We consider the following generating functions
\[H(t) = \sum_{k\geq 0}h_kt^k = \prod_{i=1}^n\frac{1}{1-x_it} ,\]
\[E(t) = \sum_{k \geq 0}e_kt^k = \prod_{i=1}^n(1+x_it) ,\]
\[P(t) = \sum_{k\geq 1}p_kt^{k-1} = \sum_{i=1}^n\frac{x_i}{1 - x_it} .\]
By (2.6), (2.10) in \cite{macdonald1998symmetric}, we have
\[P(t) = H'(t)/H(t) = H'(t)E(-t).\]
By comparing the coefficients of $t^{k-1}$ in both sides of the identity, we get the conclusion. 
\end{proof}

\subsection{Partitions, Diagrams and Binary Tableaux}
A non-negative integer sequence $\lambda = (\lambda_1, \lambda_2, \dots)$ is called a {\em partition} if $\lambda_1 \geq \lambda_2 \geq \dots$. If $\lambda=(\lambda_1,\lambda_2,\dots,\lambda_l)$ with $\lambda_l >0$ and $\sum\limits_{i=1}^l \lambda_i = m$, we write $l(\lambda)=l$, $|\lambda| = m$. We call $l(\lambda)$ the {\em length}, and $|\lambda|$ the {\em size} of the partition $\lambda$. Each partition $\lambda$ is presented by a {\em Young diagram} that is a collection of boxes such that: The leftmost boxes of each row are in a column, and the numbers of boxes from top row to bottom row are $\lambda_1,\lambda_2,\dots$, respectively. 

\begin{ex}
\normalfont
The Young diagram associated to the partition $\lambda =(3,3,2,1,0,0)$ is
$$\begin{ytableau}
\,&\,&\,\\
\,&\,&\,\\
\,&\,\\
\\
\end{ytableau}$$
\end{ex}

Let $\lambda=(\lambda_1,\lambda_2, \dots)$ and $\nu = (\nu_1,\nu_2,\dots)$ be partitions. We define the sum of two partitions by $\lambda + \nu = (\lambda_1 + \nu_1, \lambda_2+\nu_2, \dots)$. For a non-negative integer $n$, we denote $\mathcal{P}_n$ the set of all partition of length at most $n$. Let $(1^n)$ be the $n$-tuple partition $(1,\dots,1)$, and $\lambda = (\lambda_1, \dots, \lambda_n) \in \mathcal{P}_n$, then we have $\lambda+(1^n)=(\lambda_1+1, \dots, \lambda_n+1)$.

Let $\lambda =(\lambda_1,\lambda_2,\dots)$ and $\mu=(\mu_1,\mu_2,\dots)$ be two partitions. We say that $\lambda$ is {\em smaller} than $\mu$ if and only if $\lambda_i \leq \mu_i$ for all $i$, and we write $\lambda \leq \mu$. In this case, we define the {\em skew Young diagram} $\mu/ \lambda$ as the result of removing boxes in the Young diagram $\lambda$ from the Young diagram $\mu$. We write $|\mu/\lambda|=|\mu|-|\lambda|$ for the size, and $r(\mu/\lambda), c(\mu/\lambda)$ for the number of rows, columns of the skew Young diagram $\mu/\lambda$, respectively. We say two boxes in a skew shape are {\em adjacent} whenever they share an edge, and we say a skew shape $\mu/\lambda$ is {\em connected} whenever every pair of its boxes is connected by a sequence of adjacent boxes. A {\em ribbon} is a connected skew shape with no $2\times2$ square. The {\em maximal ribbon along the northwest border of a connected skew Young diagram $\mu/\lambda$} is the ribbon $\nu/\lambda$ of size as max as possible, contained in $\mu/\lambda$. A {\em binary tableau} $T$ of skew shape $\mu/\lambda$ is a result of filling the skew Young diagram $\mu/\lambda$ by the alphabet $\{0,1\}$ such that the entry in the bottom of each column is $1$. A binary tableau $T$ is said to have {\em content} $\alpha(T)=(\alpha_0,\alpha_1)$ if $\alpha_i = \alpha_i(T)$ is the number of entries $i$ in $T$. We write $sh(T)$ for the shape of the tableau $T$.

\begin{ex}\label{skew_diagram} 
\normalfont
We consider partitions in $\mathcal{P}_6$: $\lambda = (3,3,2,1,0,0)$ and $\mu = (4,3,3,3,1,1)$. Then $\mu \geq \lambda$ and the skew diagram $\mu/\lambda$ has $r(\mu/\lambda)=5$ rows, $c(\mu/\lambda) = 4$ columns. In this case $\mu/\lambda$ is not connected and is not a ribbon. However, it contains ribbons of size $1,2,3$, for example
\[\begin{ytableau}
*(gray) & *(gray) & *(gray) & *(white) \\
*(gray) & *(gray) & *(gray)   \\
*(gray) & *(gray)   \\
*(gray) 
\end{ytableau} \quad\quad \begin{ytableau}
*(gray) & *(gray) & *(gray) \\
*(gray) & *(gray) & *(gray)   \\
*(gray) & *(gray)  \\
*(gray)  \\
*(white) \\
*(white) 
\end{ytableau}  \quad\quad
\begin{ytableau}
*(gray) & *(gray) & *(gray) \\
*(gray) & *(gray) & *(gray)   \\
*(gray) & *(gray) & *(white)   \\
*(gray) & *(white) & *(white)
\end{ytableau}
\]
The following tableau $T$ is a binary tableau of skew shape $sh(T)=\mu/\lambda$.

$$
\begin{tikzpicture}[inner sep=0in,outer sep=0in]
\node (n) {\begin{varwidth}{4cm}{
\begin{ytableau}
*(gray) & *(gray) & *(gray) & *(white) 1\\
*(gray) & *(gray) & *(gray)   \\
*(gray) & *(gray) & *(white) 0  \\
*(gray) & *(white) 1 & *(white) 1\\
*(white) 1 \\
*(white) 1
\end{ytableau} 
}\end{varwidth}};
\draw[very thick,black] (n.south west)--([xshift=0.6cm,yshift=0cm]n.south west)--([xshift=0.6cm,yshift=1.2cm]n.south west)--([xshift=1.8cm,yshift=1.2cm]n.south west)--([xshift=1.8cm,yshift=2.4cm]n.south west)--([xshift=1.8cm,yshift=3cm]n.south west)--([xshift=2.4cm,yshift=3cm]n.south west)--([xshift=2.4cm,yshift=3.6cm]n.south west)--([xshift=1.8cm,yshift=3.6cm]n.south west)--([xshift=1.8cm,yshift=2.4cm]n.south west)--([xshift=1.2cm,yshift=2.4cm]n.south west)--([xshift=1.2cm,yshift=1.8cm]n.south west)--([xshift=0.6cm,yshift=1.8cm]n.south west)--([xshift=0.6cm,yshift=1.2cm]n.south west)--([xshift=0cm,yshift=1.2cm]n.south west)--([xshift=0cm,yshift=0cm]n.south west)
;
\end{tikzpicture}
$$
Here the diagram in gray means the Young diagram $\lambda$ removed from the Young diagram $\mu$. We bold-outline the skew shape $\mu/\lambda$. The content of the binary tableau $T$ is $\alpha(T)=(1,5)$. Other example, let $\nu = (2,1,0,0,0,0)$, then $\mu/\nu$ is connected and is not a union of ribbons. The following is a binary tableau of skew shape $\mu/\nu$, with content $(5,7)$. 
$$
\begin{tikzpicture}[inner sep=0in,outer sep=0in]
\node (n) {\begin{varwidth}{4cm}{
\begin{ytableau}
*(gray) & *(gray) & *(white)1 & *(white) 1\\
*(gray) & *(white) 1 & *(white)0  \\
*(white)0 & *(white)0 & *(white) 0  \\
*(white)0 & *(white) 1 & *(white) 1\\
*(white) 1 \\
*(white) 1
\end{ytableau} 
}\end{varwidth}};
\draw[very thick,black] (n.south west)--([xshift=0.6cm,yshift=0cm]n.south west)--([xshift=0.6cm,yshift=1.2cm]n.south west)--([xshift=1.8cm,yshift=1.2cm]n.south west)--([xshift=1.8cm,yshift=3cm]n.south west)--([xshift=2.4cm,yshift=3cm]n.south west)--([xshift=2.4cm,yshift=3.6cm]n.south west)--([xshift=1.2cm,yshift=3.6cm]n.south west)--([xshift=1.2cm,yshift=3cm]n.south west)--([xshift=0.6cm,yshift=3cm]n.south west)--([xshift=0.6cm,yshift=2.4cm]n.south west)--([xshift=0cm,yshift=2.4cm]n.south west)--([xshift=0cm,yshift=0cm]n.south west)
;
\end{tikzpicture}
$$

\end{ex}

\subsection{Grothendieck Polynomials of Grassmannian Type}\label{Grothendieck_poly_Gr}
In this paper, we restrict our attention to the simplest complex flag variety: the Grassmann variety of $n$ dimensional subspaces of $\mathbb{C}^{n+m}$. The Grothendieck polynomials in this case are indexed by Grassmannian permutations \cite{buch2002littlewood}. Namely, let $\lambda=(\lambda_1, \dots, \lambda_n)$ be a partition of length at most $n$. The Grassmannian permutation $w_\lambda$ of descent $n$ is defined by $w_\lambda(i) = i +\lambda_{n+1-i}$ for $i \in [1,n]$  and $w_\lambda(i) < w_{\lambda}(i+1)$ for all $i \ne n$. Set $G_\lambda = \mathfrak{G}_{w_\lambda}$. There are several new formulas for $G_\lambda$, for example, in the terms of set-valued tableaux  \cite{buch2002littlewood} or Jacobi--Trudy identity \cite{kirillov2016some}. We here recall the Weyl identity given by Ikeda and Naruse \cite{ikeda2014proof,ikedanaruse2013}: 
\begin{equation*}\label{G2}G_\lambda(x_1,\dots,x_n) = \frac{\det (x_i^{\lambda_j + n - j}(1 +\beta x_i)^{j-1})_{n \times n}}{ \prod_{1 \leq i <j \leq n} (x_i-x_j)}.
\end{equation*}
They are polynomial representatives of Schubert classes of the $K$-theory of Grassmann varieties of $n$ dimensional subspaces of $\mathbb{C}^{n+m}$. 
\begin{ex}
\normalfont
For $n = 3$ and $\lambda = (2,1,0)$, we have 
\begin{align*}G_{(2,1,0)}(x_1,x_2,0) &= \frac{\begin{vmatrix}
x_1^4 & x_1^2(1+\beta x_1) & (1+\beta x_1)^2\\
x_2^4 & x_2^2(1+\beta x_2) & (1+\beta x_2)^2\\
0 & 0 & 1
\end{vmatrix}}{(x_1 - x_2)x_1 x_2}\\
&= x_1^2x_2 + x_1x_2^2 + \beta x_1^2x_2^2.
\end{align*}
We can check that $G_{(2,1,0)}(x_1,x_2,0) = G_{(2,1)}(x_1,x_2)$. 
\end{ex}
\begin{rema}
\normalfont
For $\beta = 0$, the Grothendieck polynomial $G_\lambda(x_1,\dots,x_n)$ coincides with the Schur polynomial $s_\lambda(x_1,\dots,x_n)$. 
\end{rema}
\section{Proof of the Main Theorem} \label{proof}
We only need to focus on the case $\beta \ne 0$. Indeed, if $\beta = 0$, then $G_\lambda$ is the Schur function associated with partition $\lambda$. The Murnaghan--Nakayama rule for Schur polynomials is very well known \cite{macdonald1991notes}. When $\beta \neq 0$, set
\[\widetilde{G}_\lambda(x_1,\dots,x_n) = \beta^{|\lambda|}G_\lambda\left(\frac{x_1}{\beta},\dots,\frac{x_n}{\beta}\right).\]
Theorem \ref{main} can be reduced to the following theorem.
\begin{theo}\label{main1}
For any partition $\lambda \in \mathcal{P}_n$ and $k \in \mathbb{Z}_{>0}$, we have
\begin{equation}\label{main10}
\widetilde{G}_\lambda p_k = \sum_{\mu}(-1)^{|\mu/\lambda|-c(\mu/\lambda)}\binom{r(\mu/\lambda)-1}{k-c(\mu/\lambda)}\widetilde{G}_{\mu},
\end{equation}
where the sum runs over all partitions $\mu \in \mathcal P_n$, $\mu \geq \lambda$ such that $c(\mu/\lambda) \leq k$, $\mu/\lambda$ is connected and the maximal ribbon along its northwest border has size at least $k$. 
\end{theo}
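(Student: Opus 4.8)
The plan is to pass to the bialternant description of $\widetilde G_\lambda$ and turn multiplication by $p_k$ into a determinant–straightening problem. Put $\rho=(n-1,n-2,\dots,1,0)$, $\Delta=\prod_{1\leq i<j\leq n}(x_i-x_j)$, and for an arbitrary sequence $\nu\in\mathbb Z_{\geq 0}^{n}$ write $A_\nu=\det\bigl(x_i^{\nu_j}(1+x_i)^{j-1}\bigr)_{1\leq i,j\leq n}$; substituting the normalisation $\widetilde G_\lambda(x)=\beta^{|\lambda|}G_\lambda(x/\beta)$ into the bi-alternant formula of Ikeda and Naruse and clearing powers of $\beta$ gives $\widetilde G_\lambda=A_{\lambda+\rho}/\Delta$ (here $\lambda+\rho$ is strictly decreasing because $\lambda$ is a partition). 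Expanding the products $A_\mu e_i$ and $A_\mu h_\ell$ and reindexing the resulting signed sums over $S_n$ yields the elementary identities
\[
A_\mu\, e_i=\sum_{J\subseteq[n],\;|J|=i}A_{\mu+\epsilon_J},
\qquad
A_\mu\, h_\ell=\sum_{\delta\in\mathbb Z_{\geq 0}^{n},\;|\delta|=\ell}A_{\mu+\delta},
\qquad \epsilon_J:=\sum_{j\in J}\epsilon_j .
\]
Plugging these into the key formula (\ref{p}) and collecting terms according to the vector $\eta$ that is added, the coefficient of $A_{\lambda+\rho+\eta}$ becomes $\sum_{i=0}^{k-1}(-1)^i(k-i)\binom{s(\eta)}{i}$, where $s(\eta)$ is the number of positive parts of $\eta$; a one-line binomial evaluation shows this equals $1$ if $s(\eta)=1$ and $0$ if $s(\eta)\geq 2$. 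Hence
\[
\widetilde G_\lambda\, p_k=\frac1\Delta\sum_{j=1}^{n}A_{\lambda+\rho+k\epsilon_j},
\]
which is exactly why formula (\ref{p}) is singled out: it makes the cancellation among the $e_ih_\ell$-contributions transparent.

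The heart of the argument is a straightening lemma evaluating each $A_{\lambda+\rho+k\epsilon_j}/\Delta$ in the basis $\{\widetilde G_\mu\}$. The sequence $\lambda+\rho+k\epsilon_j$ is strictly decreasing except that its $j$-th entry $\lambda_j+n-j+k$ may now exceed entries to its left, and — unlike the Schur case — a repeated $x$-exponent does not annihilate $A_\nu$, because the two columns still differ, carrying distinct powers of $1+x_i$. I would build the straightening from two basic moves: (a) transposing two columns carrying the same power of $1+x$ changes $A_\nu$ by a sign; and (b) when two consecutive entries of $\nu$ coincide, the splitting $x^{a}(1+x)^{b}=x^{a}(1+x)^{b-1}+x^{a+1}(1+x)^{b-1}$ applied to the right-hand column, together with the vanishing of a determinant with two equal columns, trades that repeat for an increment of the neighbouring $x$-exponent. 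Iterating (a) and (b) to drive the oversized $j$-th column leftwards, $A_{\lambda+\rho+k\epsilon_j}/\Delta$ unfolds as a signed, weighted sum of $\widetilde G_\mu$ over partitions $\mu$ obtained from $\lambda$ by adjoining a connected skew shape whose maximal northwest ribbon has its lowest box in row $j$. The bookkeeping of "which column absorbs how much, with what sign" at each step is precisely what the binary tableaux record: a straightening history producing $\widetilde G_\mu$ corresponds to a binary tableau of shape $\mu/\lambda$, the entries $1$ marking the boxes of the governing ribbon (so necessarily the bottom box of each column) and the entries $0$ marking the remaining "Grothendieck correction" boxes, the history carrying sign $(-1)^{\alpha_0}$ where $\alpha_0$ is the number of $0$'s.

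To conclude I would sum the straightening lemma over $j=1,\dots,n$. A connected skew shape $\mu/\lambda$ pins down the row $j$ of the lowest box of its maximal northwest ribbon uniquely, so each admissible $\mu$ receives a contribution from exactly one $j$ and no further cancellation occurs. The coefficient of $\widetilde G_\mu$ is then the $(-1)^{\alpha_0}$-weighted count of the relevant binary tableaux of shape $\mu/\lambda$; since such a tableau is just a choice, row by row, of which boxes are $0$ and which are $1$ subject only to the bottom-of-column constraint, this weighted count factors over rows and telescopes to $(-1)^{|\mu/\lambda|-c(\mu/\lambda)}\binom{r(\mu/\lambda)-1}{k-c(\mu/\lambda)}$, and the relevant tableaux exist exactly when $c(\mu/\lambda)\leq k$ and the maximal northwest ribbon of $\mu/\lambda$ has size at least $k$ (this last condition being equivalent to $k-c(\mu/\lambda)\leq r(\mu/\lambda)-1$, i.e.\ to non-vanishing of the binomial coefficient). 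This proves Theorem \ref{main1}, and reversing the normalisation recovers Theorem \ref{main}. The one genuinely delicate point is the straightening lemma: arranging the iteration so that histories assemble into binary tableaux and checking that their signed count collapses to a single binomial coefficient; the remaining steps are reindexing or elementary binomial manipulations.
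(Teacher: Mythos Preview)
Your reduction to $\widetilde G_\lambda\,p_k=\Delta^{-1}\sum_{j=1}^{n}A_{\lambda+\rho+k\epsilon_j}$ is correct and attractive: the binomial evaluation forcing $s(\eta)=1$ is exactly why identity~(\ref{p}) is singled out, and this step is a genuinely different route from the paper's. The paper never looks at a non-partition alternant; instead it applies Lenart's Pieri rules (Lemma~\ref{Grothendieck_Pieri}) for $\widetilde G_\lambda e_i$ and $\widetilde G_\lambda h_{k-i}$ term by term, so that every intermediate expression is already a $\mathbb Z$-linear combination of $\widetilde G_\mu$'s indexed by pairs $(T_1,T_2)$ of binary tableaux. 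What remains for the paper is a purely combinatorial enumeration of such pairs with fixed outer shape $\mu$, which it carries out in two steps and finishes with a short binomial lemma collapsing the inner sum according to whether $\mu/\lambda$ is connected.

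The gap in your argument is the straightening lemma, which you acknowledge is ``the one genuinely delicate point'' but do not prove. As written, moves (a) and (b) are not well-posed: in $A_\nu$ the $(1+x)$-exponents are always $0,1,\dots,n-1$, so (a) never applies, and a single application of (b) to column $j+1$ replaces $x^{a}(1+x)^{j}$ by $x^{a+1}(1+x)^{j-1}$, leaving \emph{two} columns with $(1+x)^{j-1}$ and hence a determinant no longer of the form $A_\nu$. A correct algorithm would have to work in the larger family $\det\bigl(x_i^{a_j}(1+x_i)^{b_j}\bigr)$, prove termination, and exhibit a bijection between straightening histories and binary tableaux carrying sign $(-1)^{\alpha_0}$; none of this is supplied. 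The downstream claims---that each $j$ contributes precisely those $\mu$ with the lowest ribbon box in row $j$, and that the signed tableau count ``factors over rows and telescopes'' to $(-1)^{|\mu/\lambda|-c}\binom{r-1}{k-c}$---are consistent with small examples but are assertions, not arguments. In effect your straightening lemma is equivalent in content to Lenart's Pieri rules restricted to this situation, and the paper's approach simply cites those rules rather than reproving them.
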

Before going to the proof, we need to restate the following lemma. It was key to obtain the Pieri rules for Grothendieck polynomials of Grassmannian type \cite{lenart2000combinatorial}.
\begin{lemm}[Theorem 3.2, \cite{lenart2000combinatorial}]\label{Grothendieck_Pieri}
For any partition $\lambda \in \mathcal{P}_n$ and $k \in \mathbb{N}$, we have
\begin{equation}\label{lenart1}
\widetilde{G}_\lambda e_k = \sum_{T} (-1)^{\alpha_0(T)} \widetilde{G}_\mu,
\end{equation}
\begin{equation}\label{lenart2}
\widetilde{G}_\lambda h_k = \sum_{T} (-1)^{\alpha_0(T)} \widetilde{G}_\mu.
\end{equation}
The first sum runs over all binary tableaux $T$ of shape $\mu/\lambda$ with $\mu \in \mathcal{P}_n$, $\lambda \leq \mu \leq \lambda + (1^n)$, $\alpha_1(T) = k$. The second sum runs over all binary tableaux $T$ of shape $\mu/\lambda$ with $\mu \in \mathcal{P}_n$ , $\lambda \leq \mu$, $\alpha_1(T) = k$, no two $1$'s in the same column.  
\end{lemm}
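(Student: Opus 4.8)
The plan is to prove both identities directly from the bi-alternant formula, reducing each Pieri product to an identity between antisymmetric polynomials and then straightening the resulting determinants into the standard basis.

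First I would record the clean bi-alternant form of the normalized polynomial. Substituting $x_i\mapsto x_i/\beta$ into the formula for $G_\lambda$ and clearing the powers of $\beta$ from each column and from the denominator gives
\[
\widetilde{G}_\lambda=\frac{\det\!\big(x_i^{\lambda_j+n-j}(1+x_i)^{j-1}\big)_{n\times n}}{\prod_{1\le i<j\le n}(x_i-x_j)}.
\]
Write $\Delta=\prod_{i<j}(x_i-x_j)$ and $a_\mu=\Delta\,\widetilde{G}_\mu=\det(x_i^{\mu_j+n-j}(1+x_i)^{j-1})$, so each $a_\mu$ is antisymmetric. Since the $\widetilde{G}_\mu$ form a basis of the symmetric polynomials and $e_k,h_k$ are symmetric, both $e_k\,a_\lambda$ and $h_k\,a_\lambda$ are antisymmetric, and the two identities are equivalent to expanding these products as integer combinations of the $a_\mu$ and dividing by $\Delta$.

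Next I would introduce the generating functions $E(t)=\prod_i(1+x_it)=\sum_k e_kt^k$ and $H(t)=\prod_i(1-x_it)^{-1}=\sum_k h_kt^k$, so that $e_k,h_k$ are coefficients of $t^k$. Multiplying $a_\lambda$ by $\prod_i(1+x_it)$, resp.\ $\prod_i(1-x_it)^{-1}$, and absorbing the $i$-th factor into the $i$-th row turns the entry $x_i^{\lambda_j+n-j}(1+x_i)^{j-1}$ into $x_i^{\lambda_j+n-j}(1+x_i)^{j-1}+t\,x_i^{\lambda_j+n-j+1}(1+x_i)^{j-1}$, resp.\ $\sum_{m\ge0}t^m x_i^{\lambda_j+n-j+m}(1+x_i)^{j-1}$. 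Expanding by multilinearity in the columns, the coefficient of $t^k$ becomes a signed sum of determinants $D(c)=\det(x_i^{c_j}(1+x_i)^{j-1})$ with $c_j=\lambda_j+n-j+r_j$: for $e_k$ the factor $1+x_it$ is linear, so each column is raised at most once and the sum runs over $k$-subsets of columns; for $h_k$ the geometric series lets a column be raised arbitrarily often, so the sum runs over compositions $(r_1,\dots,r_n)$ with $\sum_j r_j=k$.

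The heart of the argument is to straighten each $D(c)$ into the basis $\{a_\mu\}$. When the $x$-exponents satisfy $c_1>\cdots>c_n$, one has $D(c)=a_\mu$ with $\mu_j=c_j-(n-j)$, contributing with sign $+1$; these account for the "all-ones" tableaux. When two adjacent $x$-exponents coincide or ascend, $D(c)$ is not a standard alternant, because the factors $(1+x_i)^{j-1}$ are tied to the column index rather than to the exponent. Here I would straighten using $x(1+x)^{b}=(1+x)^{b+1}-(1+x)^{b}$ (equivalently, column operations on adjacent columns), which rewrites $D(c)$ as $\pm a_\nu$ for a strictly larger partition $\nu$ and thereby inserts additional boxes. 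Collecting all contributions, I expect the signed total to reorganize exactly into $\sum_T(-1)^{\alpha_0(T)}\widetilde{G}_\mu$ over binary tableaux $T$ of shape $\mu/\lambda$: each box produced by straightening carries an entry $0$ and a sign $-1$, the bottom box of every column is forced to be a $1$, and the number of $1$'s equals $k$. The linear factor $1+x_it$ raises each row at most once, forcing $\lambda\le\mu\le\lambda+(1^n)$ in the $e_k$ case, whereas the geometric series in the $h_k$ case produces instead the condition that no column of the tableau contains two $1$'s. I would first pin down the signs on a small case: for $n=2$, $k=1$ one gets $\widetilde{G}_{(0,0)}e_1=\widetilde{G}_{(1,0)}-\widetilde{G}_{(1,1)}=x_1+x_2=e_1$, reproducing the single-box tableau ($+$) and the vertical-domino tableau with a $0$ on top ($-$).

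The main obstacle is precisely this straightening-and-collection step: proving that the signed sum of straightened determinants matches the binary-tableau sum termwise. This needs a sign-reversing involution to cancel the vanishing and over-counted configurations, together with a bijection between the straightening steps and the $0/1$ fillings, from which the constraints "bottom of each column is a $1$", $\alpha_1(T)=k$, and the two differing shape conditions are read off. Should this bookkeeping prove too delicate to carry out by hand, the statement is available directly as \cite[Theorem 3.2]{lenart2000combinatorial}; a more bijective alternative is to model each $a_\mu$ by a family of non-intersecting lattice paths (Lindstr\"om--Gessel--Viennot) and interpret multiplication by $e_k$ or $h_k$ as the insertion of extra steps, converting the identity into a weight-preserving correspondence between path families and binary tableaux.
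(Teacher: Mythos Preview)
The paper does not prove this lemma at all: it is quoted verbatim from \cite[Theorem~3.2]{lenart2000combinatorial} and used as a black box. So there is no ``paper's own proof'' to compare against; the paper's approach is simply to cite Lenart.

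Your proposal therefore goes strictly beyond what the paper does, by sketching a direct bi-alternant argument. The first half of that sketch is sound: the normalization $\widetilde G_\lambda=\det(x_i^{\lambda_j+n-j}(1+x_i)^{j-1})/\Delta$ is correct, absorbing $\prod_i(1+x_it)$ (resp.\ $\prod_i(1-x_it)^{-1}$) into the rows and then expanding by multilinearity in the \emph{columns} really does produce $\sum_{r}t^{|r|}D(\lambda+\delta+r)$ with $r\in\{0,1\}^n$ (resp.\ $r\in\mathbb Z_{\ge0}^n$), since column $j$ becomes a sum $\sum_m t^m v_j^{(m)}$ with $v_j^{(m)}$ having $i$-th entry $x_i^{\lambda_j+n-j+m}(1+x_i)^{j-1}$. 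Your sanity check at $n=2$, $k=1$ is also right.

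Where the proposal stops being a proof is exactly where you say it does. When adjacent $x$-exponents coincide, $D(c)$ does \emph{not} vanish (the $(1+x_i)^{j-1}$ factors are tied to the column index), and the identity $x(1+x)^b=(1+x)^{b+1}-(1+x)^b$ shifts both the $x$-exponent and the $(1+x)$-exponent simultaneously, so each straightening move spawns two new determinants rather than one. Turning this recursion into the closed-form sum $\sum_T(-1)^{\alpha_0(T)}\widetilde G_\mu$, with the precise constraints ``bottom of each column is $1$'', $\alpha_1(T)=k$, $\mu\le\lambda+(1^n)$ (for $e_k$) or ``no two $1$'s in the same column'' (for $h_k$), is the entire content of the lemma, and you have not supplied the bijection or the cancelling involution that would do it. Your fallback---invoking \cite[Theorem~3.2]{lenart2000combinatorial}---is exactly what the paper does, so as a contribution to the present paper your proposal is acceptable; as an independent proof it remains a plan rather than an argument.
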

\begin{proof}[Proof of Theorem \ref{main1}]
By equalities (\ref{p}), (\ref{lenart1}), (\ref{lenart2}), we have  
\begin{align}
    \widetilde{G}_\lambda p_k &= \sum_{i=0}^{k-1}(-1)^i(k-i) \widetilde{G}_\lambda e_ih_{k-i}\\
    &=\sum_{i=0}^{k-1}(-1)^i(k-i)\sum_{T_1} (-1)^{\alpha_0(T_1)} \widetilde{G}_\nu h_{k-i} \label{0_last_equality}\\
    \intertext{where $T_1$ has shape $\nu/\lambda$ with $\nu \in \mathcal{P}_n$, $\lambda \leq \nu \leq \lambda+(1^n)$, $\alpha_1(T_1)=i$,}
    &= \sum_{i=0}^{k-1}(-1)^i(k-i)\sum_{T_1} (-1)^{\alpha_0(T_1)}\sum_{T_2} (-1)^{\alpha_0(T_2)} \widetilde{G}_\mu \label{1_last_equality}\\
    \intertext{where $T_2$ has shape $\mu/\nu$ with $\mu \in \mathcal{P}_n$, $\nu \leq \mu$, $\alpha_1(T_2) = k-i$, no two $1$'s in the same column,}
    &=\sum_{\mu}\sum_{T = T_1\sqcup T_2} (-1)^{|\mu/\lambda| - \alpha_1(T_2)}\alpha_1(T_2)\widetilde{G}_\mu, \label{last_equality}
\end{align}
because $$i = \alpha_1(T_1), k-i = \alpha_1(T_2),$$ and $$\alpha_0(T_1)+\alpha_1(T_1)+\alpha_0(T_2)+\alpha_1(T_2)=|\mu/\lambda|.$$
In (\ref{last_equality}), the sum runs over binary tableaux $T = T_1 \sqcup T_2$ of shape $\mu/\lambda$, with $\mu \in \mathcal{P}_n$, $\mu \geq \lambda$, $\alpha_1(T) = k$, where $T_1$ and $T_2$ are filled according to (\ref{0_last_equality}) and (\ref{1_last_equality}). Fix such a shape $\mu$ containing $\lambda$, we are going to determine the form of $\mu$ and the coefficient of $\widetilde{G}_\mu$ appearing in the decomposition of $\widetilde{G}_\lambda p_k$. \\

\underline{First step:} Construct all tableaux $T$ mentioned in (\ref{last_equality}) of given skew shape $\mu/\lambda$ such that the numbers of entries with value $1$ in $T_2$ is a fixed number $j$. We proceed as follows.
\begin{itemize}
    \item First, we label all boxes in the bottom of each column of the skew diagram $\mu/\lambda$ by $1$. Let $\mathcal{B}$ be the set of boxes \fbox{1} we have created.  
    \item Now, we will choose a subset of boxes in $\mathcal{B}$ and set it as the set of boxes in the bottom of $T_2$, say $\mathcal{B}_2$. In fact, we cannot choose such a subset arbitrarily because its complement in $\mathcal{B}$, say $\mathcal{B}_1$, will be a subset of boxes in the bottom of $T_1$. Hence, it must satisfy a strict condition that the boxes in $\mathcal{B}_1$ are located in the skew diagram $(\lambda + (1^n))/\lambda$. Therefore, to choose a subset $\mathcal{B}_2$, we should start by choosing $\mathcal{B}_1$. Fixing a number of entries with value $1$ in $T_2$, say $\alpha_1(T_2)=j$, we have
    \begin{itemize}
        \item The cardinality of $\mathcal{B}_1$ is $c(\mu/\lambda)-j$. 
        \item The elements in $\mathcal{B}_1$ are chosen arbitrarily from $\gamma :=\mathcal{B}\cap (\lambda+(1^n))/\lambda$.
    \end{itemize}
    Hence, for a fixed $\alpha_1(T_2)=j$, the number of choices of $\mathcal{B}_2$ is equal to the number of choices of $\mathcal{B}_1$ and it is  
    \begin{equation}\label{choice_step2}
        \binom{|\gamma|}{c(\mu/\lambda)-j}.
    \end{equation}
    Since $\mathcal{B}_2= \mathcal{B}\setminus \mathcal{B}_1$, we have \begin{equation}\label{range_j}
         j=|\mathcal{B}_2| \in [|\mathcal{B}\setminus \mathcal{B}_1|, |\mathcal{B}|]= [c(\mu/\lambda)- |\gamma|,c(\mu/\lambda)].
    \end{equation}
    \item Now, the last step to construct tableau $T$ is locating remaining entries with value $1$ of $T$ which are not in the bottom $\mathcal{B}$ in the skew diagram $(\lambda+(1^n)/\lambda) \cap (\mu/\lambda)$. We have
    \begin{itemize}
        \item The number of remaining entries with value $1$ is $k-c(\mu/\lambda)$.
        \item Such entries with value $1$ are chosen arbitrarily from $\eta := (\lambda+(1^n)/\lambda) \cap (\mu/\lambda) {\setminus} \gamma$.
    \end{itemize}
    Hence, the number of choices of this step is 
    \begin{equation}\label{choice_step3}
        \binom{|\eta|}{k-c(\mu/\lambda)}.
    \end{equation}
\end{itemize}
Therefore, we have described a way to construct tableaux $T$ of given skew shape $\mu/\lambda$ such that the number of entries with value $1$ in $T_2$ is a fixed number $j$. \\

\underline{Second step:} Substitute (\ref{choice_step2}), (\ref{range_j}), (\ref{choice_step3}) to (\ref{last_equality}) and simplify it. We have:
\begin{equation}\label{equality_by_j}
    \widetilde{G}_\lambda p_k =\sum_{\mu}  \sum_{j=c(\mu/\lambda)-|\gamma|}^{c(\mu/\lambda)} (-1)^{|\mu/\lambda| - j}j \binom{|\gamma|}{c(\mu/\lambda)-j} \binom{|\eta|}{k-c(\mu/\lambda)} \widetilde{G}_\mu.
\end{equation}
We note that the binomial coefficient 
\[\binom{|\eta|}{k-c(\mu/\lambda)}\]
depends only on $\lambda,\mu$ and $k$. Thus, in order to simplify the coefficient of $\widetilde{G}_\mu$, we only need to determine the sum
\[\sum_{j=c(\mu/\lambda)-|\gamma|}^{c(\mu/\lambda)} (-1)^{|\mu/\lambda| - j}j \binom{|\gamma|}{c(\mu/\lambda)-j}.\]
Since $k >0$, then $|\gamma| \geq 1$. We prove the following lemma.
\begin{lemm}
The sum 
\begin{equation}\label{sum}
\sum_{j=c(\mu/\lambda)-|\gamma|}^{c(\mu/\lambda)} (-1)^{c(\mu/\lambda) - j}j \binom{|\gamma|}{c(\mu/\lambda)-j}
\end{equation}
is equal to $0$ if $|\gamma| > 1$ and $1$ if $|\gamma| =1$.
\end{lemm}
\begin{proof}
First, we consider the following identity:
\begin{equation}\label{1-x}
    (1-x)^m = \sum\limits_{i=0}^m (-x)^i \binom{m}{i}.
\end{equation}
When $m\geq 1$, $x=1$, from (\ref{1-x}), we get
\begin{equation}\label{m>=1,x=1}
    0 = \sum\limits_{i=0}^m(-1)^i \binom{m}{i}. 
\end{equation}
Differentiating both sides of (\ref{1-x}), we get
\begin{equation}\label{diff}
    m(1-x)^{m-1} = \sum\limits_{i=0}^m i(-x)^{i-1} \binom{m}{i}.
\end{equation}
When $m>1$, $x=1$, from (\ref{diff}), we get
\begin{equation}\label{diff_m>1,x=1}
    0 = \sum\limits_{i=0}^m (-1)^i i \binom{m}{i}.
\end{equation}

Now, we use the equalities above to prove the lemma. Set $i = c(\mu/\lambda)-j$ and $c = c(\mu/\lambda)$. Then, (\ref{sum}) can be rewritten as 
\[\sum_{i = 0}^{|\gamma|}(-1)^{i}(c-i)\binom{|\gamma|}{i} = c \sum_{i=0}^{|\gamma|} (-1)^i\binom{|\gamma|}{i} - \sum_{i=0}^{|\gamma|}(-1)^i i \binom{|\gamma|}{i}.\]
Since $|\gamma|\geq 1$, then by (\ref{m>=1,x=1}), we get
\[\sum_{i=0}^{|\gamma|} (-1)^i\binom{|\gamma|}{i} = 0.\]
If $|\gamma| > 1$, then by (\ref{diff_m>1,x=1}), we get
\[\sum_{i=0}^{|\gamma|}(-1)^i i \binom{|\gamma|}{i} = 0.\]
If $|\gamma| = 1$, then 
\[\sum_{i=0}^{|\gamma|}(-1)^i i \binom{|\gamma|}{i} = \sum_{i=0}^{1}(-1)^i i \binom{1}{i} = -1.\]
We obtain the result as desired. 
\end{proof}
Now, we consider two cases.
\begin{itemize}
    \item If $|\gamma|=1$, then $\mu/\lambda$ is connected (by definition of $\gamma$, the cardinality of $\gamma$ counts the number of connected components of $\mu/\lambda$). The reader may check that connectedness implies $|(\lambda+(1^n)/\lambda) \cap (\mu/\lambda)|=r(\mu/\lambda)$. Therefore, $|\eta| = r(\mu/\lambda)-1$. The coefficient of $\widetilde{G}_\mu$ in (\ref{equality_by_j}) is 
    \begin{equation*}
        (-1)^{|\mu/\lambda|-c(\mu/\lambda)}\binom{r(\mu/\lambda)-1}{k - c(\mu/\lambda)}.
    \end{equation*}
    \item If $|\gamma|>1$, the coefficient of $\widetilde{G}_\mu$ in (\ref{equality_by_j}) is $0$. 
\end{itemize}
When $|\gamma|=1$, the conditions that $\mu/\lambda$ is the shape of a tableau $T=T_1 \sqcup T_2$, where $T_1$, $T_2$ are of the form in (\ref{lenart1}), (\ref{lenart2}) and that $\alpha_1(T)=k$ are equivalent to the conditions $c(\mu/\lambda) \leq k$ (entries with value $1$ in bottom $\mathcal{B}$ is a part of all entries with value $1$ of $T$) and $k-c(\mu/\lambda) \leq r(\mu/\lambda)-1$ (entries with value $1$ not in bottom $\mathcal{B}$ can be filled into $\eta$). The last inequality condition can be rewritten as
\begin{equation}\label{ribbon_k_containing}
    k \leq c(\mu/\lambda) + r(\mu/\lambda)-1.
\end{equation}
Since $\mu/\lambda$ is connected, the right-hand side of (\ref{ribbon_k_containing}) counts the size of the maximal ribbon contained in the skew shape $\mu/\lambda$ along its northwest border. Hence, the conditions of $\mu$ such that $\widetilde{G}_\mu$ appears in the decomposition of $\widetilde{G}_\lambda p_k$ are: $\mu \in \mathcal{P}_n, \mu \geq \lambda$ such that $c(\mu/\lambda) \leq k$, $\mu/\lambda$ is connected and the maximal ribbon along its northwest border has size at least $k$.
\end{proof}
The example below visualizes the first step, constructing tableaux $T$, in the proof of Theorem \ref{main1}. 
\begin{ex} 
\normalfont
We continue Example \ref{skew_diagram}. In the picture below, $\mathcal{B}$ is the set of four boxes \fbox{\color{blue}{1}}, and the skew diagram $(\lambda+(1^n))/\lambda$ is colored in yellow. Therefore, $\gamma$ is the set of three yellow boxes with blue entries with value $1$ inside. 
$$
\begin{tikzpicture}[inner sep=0in,outer sep=0in]
\node (n) {\begin{varwidth}{4cm}{
\begin{ytableau}
*(gray) & *(gray) & *(gray) & *(yellow) \color{blue}{1}\\
*(gray) & *(gray) & *(gray) & *(yellow) \\
*(gray) & *(gray) & *(yellow)  \\
*(gray) & *(yellow) \color{blue}{1} & *(white) \color{blue}{1}\\
*(yellow)  \\
*(yellow) \color{blue}{1}
\end{ytableau} 
}\end{varwidth}};
\draw[very thick,black] (n.south west)--([xshift=0.6cm,yshift=0cm]n.south west)--([xshift=0.6cm,yshift=1.2cm]n.south west)--([xshift=1.8cm,yshift=1.2cm]n.south west)--([xshift=1.8cm,yshift=2.4cm]n.south west)--([xshift=1.8cm,yshift=3cm]n.south west)--([xshift=2.4cm,yshift=3cm]n.south west)--([xshift=2.4cm,yshift=3.6cm]n.south west)--([xshift=1.8cm,yshift=3.6cm]n.south west)--([xshift=1.8cm,yshift=2.4cm]n.south west)--([xshift=1.2cm,yshift=2.4cm]n.south west)--([xshift=1.2cm,yshift=1.8cm]n.south west)--([xshift=0.6cm,yshift=1.8cm]n.south west)--([xshift=0.6cm,yshift=1.2cm]n.south west)--([xshift=0cm,yshift=1.2cm]n.south west)--([xshift=0cm,yshift=0cm]n.south west)
;
\end{tikzpicture}
$$
The range of the number of entries with value $1$ in $T_2$ is $j \in [1,4]$. If we fix $j=2$, then there are three choices of $\mathcal{B}_1$ (also $\mathcal{B}_2$) as in the picture below (the entries with value $1$ in $\mathcal{B}_1$ are circled). 
$$
\begin{tikzpicture}[inner sep=0in,outer sep=0in]
\node (n) {\begin{varwidth}{4cm}{
\begin{ytableau}
*(gray) & *(gray) & *(gray) & *(yellow) \circled{\color{blue}{1}}\\
*(gray) & *(gray) & *(gray) & *(yellow) \\
*(gray) & *(gray) & *(yellow)  \\
*(gray) & *(yellow) \circled{\color{blue}{1}} & *(white) \color{blue}{1}\\
*(yellow)  \\
*(yellow) \color{blue}{1}
\end{ytableau} }\end{varwidth}};
\draw[very thick,black] (n.south west)--([xshift=0.6cm,yshift=0cm]n.south west)--([xshift=0.6cm,yshift=1.2cm]n.south west)--([xshift=1.8cm,yshift=1.2cm]n.south west)--([xshift=1.8cm,yshift=2.4cm]n.south west)--([xshift=1.8cm,yshift=3cm]n.south west)--([xshift=2.4cm,yshift=3cm]n.south west)--([xshift=2.4cm,yshift=3.6cm]n.south west)--([xshift=1.8cm,yshift=3.6cm]n.south west)--([xshift=1.8cm,yshift=2.4cm]n.south west)--([xshift=1.2cm,yshift=2.4cm]n.south west)--([xshift=1.2cm,yshift=1.8cm]n.south west)--([xshift=0.6cm,yshift=1.8cm]n.south west)--([xshift=0.6cm,yshift=1.2cm]n.south west)--([xshift=0cm,yshift=1.2cm]n.south west)--([xshift=0cm,yshift=0cm]n.south west)
;
\end{tikzpicture}
\quad\quad\quad
\begin{tikzpicture}[inner sep=0in,outer sep=0in]
\node (n) {\begin{varwidth}{4cm}{
\begin{ytableau}
*(gray) & *(gray) & *(gray) & *(yellow) \circled{\color{blue}{1}}\\
*(gray) & *(gray) & *(gray) & *(yellow) \\
*(gray) & *(gray) & *(yellow)  \\
*(gray) & *(yellow) \color{blue}{1} & *(white) \color{blue}{1}\\
*(yellow)  \\
*(yellow) \circled{\color{blue}{1}}
\end{ytableau} 
}\end{varwidth}};
\draw[very thick,black] (n.south west)--([xshift=0.6cm,yshift=0cm]n.south west)--([xshift=0.6cm,yshift=1.2cm]n.south west)--([xshift=1.8cm,yshift=1.2cm]n.south west)--([xshift=1.8cm,yshift=2.4cm]n.south west)--([xshift=1.8cm,yshift=3cm]n.south west)--([xshift=2.4cm,yshift=3cm]n.south west)--([xshift=2.4cm,yshift=3.6cm]n.south west)--([xshift=1.8cm,yshift=3.6cm]n.south west)--([xshift=1.8cm,yshift=2.4cm]n.south west)--([xshift=1.2cm,yshift=2.4cm]n.south west)--([xshift=1.2cm,yshift=1.8cm]n.south west)--([xshift=0.6cm,yshift=1.8cm]n.south west)--([xshift=0.6cm,yshift=1.2cm]n.south west)--([xshift=0cm,yshift=1.2cm]n.south west)--([xshift=0cm,yshift=0cm]n.south west)
;
\end{tikzpicture}
\quad\quad\quad
\begin{tikzpicture}[inner sep=0in,outer sep=0in]
\node (n) {\begin{varwidth}{4cm}{
\begin{ytableau}
*(gray) & *(gray) & *(gray) & *(yellow) \color{blue}{1}\\
*(gray) & *(gray) & *(gray) & *(yellow) \\
*(gray) & *(gray) & *(yellow)  \\
*(gray) & *(yellow) \circled{\color{blue}{1}} & *(white) \color{blue}{1}\\
*(yellow)  \\
*(yellow) \circled{\color{blue}{1}}
\end{ytableau}
}\end{varwidth}};
\draw[very thick,black] (n.south west)--([xshift=0.6cm,yshift=0cm]n.south west)--([xshift=0.6cm,yshift=1.2cm]n.south west)--([xshift=1.8cm,yshift=1.2cm]n.south west)--([xshift=1.8cm,yshift=2.4cm]n.south west)--([xshift=1.8cm,yshift=3cm]n.south west)--([xshift=2.4cm,yshift=3cm]n.south west)--([xshift=2.4cm,yshift=3.6cm]n.south west)--([xshift=1.8cm,yshift=3.6cm]n.south west)--([xshift=1.8cm,yshift=2.4cm]n.south west)--([xshift=1.2cm,yshift=2.4cm]n.south west)--([xshift=1.2cm,yshift=1.8cm]n.south west)--([xshift=0.6cm,yshift=1.8cm]n.south west)--([xshift=0.6cm,yshift=1.2cm]n.south west)--([xshift=0cm,yshift=1.2cm]n.south west)--([xshift=0cm,yshift=0cm]n.south west)
;
\end{tikzpicture}
$$

The skew shape $\eta$ contains two boxes where we put $*$ inside in the picture as follows.
$$
\begin{tikzpicture}[inner sep=0in,outer sep=0in]
\node (n) {\begin{varwidth}{4cm}{
\begin{ytableau}
*(gray) & *(gray) & *(gray) & *(yellow) \color{blue}{1}\\
*(gray) & *(gray) & *(gray) & *(yellow) \\
*(gray) & *(gray) & *(yellow)*  \\
*(gray) & *(yellow) \color{blue}{1} & *(white) \color{blue}{1}\\
*(yellow)*  \\
*(yellow) \color{blue}{1}
\end{ytableau}
}\end{varwidth}};
\draw[very thick,black] (n.south west)--([xshift=0.6cm,yshift=0cm]n.south west)--([xshift=0.6cm,yshift=1.2cm]n.south west)--([xshift=1.8cm,yshift=1.2cm]n.south west)--([xshift=1.8cm,yshift=2.4cm]n.south west)--([xshift=1.8cm,yshift=3cm]n.south west)--([xshift=2.4cm,yshift=3cm]n.south west)--([xshift=2.4cm,yshift=3.6cm]n.south west)--([xshift=1.8cm,yshift=3.6cm]n.south west)--([xshift=1.8cm,yshift=2.4cm]n.south west)--([xshift=1.2cm,yshift=2.4cm]n.south west)--([xshift=1.2cm,yshift=1.8cm]n.south west)--([xshift=0.6cm,yshift=1.8cm]n.south west)--([xshift=0.6cm,yshift=1.2cm]n.south west)--([xshift=0cm,yshift=1.2cm]n.south west)--([xshift=0cm,yshift=0cm]n.south west)
;
\end{tikzpicture}
$$
If $k=5$, then we just need to put only one remaining entry $1$ arbitrarily to the boxes marked by $*$. The remaining boxes of $\mu/\lambda$ are labeled by $0$. For example, if we fix the first choice of $\mathcal{B}_1$ in the picture above ($j=2$), we have two tableaux below (empty yellow boxes are not counted in tableaux).
$$
\begin{tikzpicture}[inner sep=0in,outer sep=0in]
\node (n) {\begin{varwidth}{4cm}{
\begin{ytableau}
*(gray) & *(gray) & *(gray) & *(yellow) \circled{\color{blue}{1}}\\
*(gray) & *(gray) & *(gray) & *(yellow) \\
*(gray) & *(gray) & *(yellow) 1 \\
*(gray) & *(yellow) \circled{\color{blue}{1}} & *(white) \color{blue}{1}\\
*(yellow) 0  \\
*(yellow) \color{blue}{1}
\end{ytableau}
}\end{varwidth}};
\draw[very thick,black] (n.south west)--([xshift=0.6cm,yshift=0cm]n.south west)--([xshift=0.6cm,yshift=1.2cm]n.south west)--([xshift=1.8cm,yshift=1.2cm]n.south west)--([xshift=1.8cm,yshift=2.4cm]n.south west)--([xshift=1.8cm,yshift=3cm]n.south west)--([xshift=2.4cm,yshift=3cm]n.south west)--([xshift=2.4cm,yshift=3.6cm]n.south west)--([xshift=1.8cm,yshift=3.6cm]n.south west)--([xshift=1.8cm,yshift=2.4cm]n.south west)--([xshift=1.2cm,yshift=2.4cm]n.south west)--([xshift=1.2cm,yshift=1.8cm]n.south west)--([xshift=0.6cm,yshift=1.8cm]n.south west)--([xshift=0.6cm,yshift=1.2cm]n.south west)--([xshift=0cm,yshift=1.2cm]n.south west)--([xshift=0cm,yshift=0cm]n.south west)
;
\end{tikzpicture}
\quad \quad \quad
\begin{tikzpicture}[inner sep=0in,outer sep=0in]
\node (n) {\begin{varwidth}{4cm}{
\begin{ytableau}
*(gray) & *(gray) & *(gray) & *(yellow) \circled{\color{blue}{1}}\\
*(gray) & *(gray) & *(gray) & *(yellow) \\
*(gray) & *(gray) & *(yellow) 0 \\
*(gray) & *(yellow) \circled{\color{blue}{1}} & *(white) \color{blue}{1}\\
*(yellow) 1 \\
*(yellow) \color{blue}{1}
\end{ytableau}
}\end{varwidth}};
\draw[very thick,black] (n.south west)--([xshift=0.6cm,yshift=0cm]n.south west)--([xshift=0.6cm,yshift=1.2cm]n.south west)--([xshift=1.8cm,yshift=1.2cm]n.south west)--([xshift=1.8cm,yshift=2.4cm]n.south west)--([xshift=1.8cm,yshift=3cm]n.south west)--([xshift=2.4cm,yshift=3cm]n.south west)--([xshift=2.4cm,yshift=3.6cm]n.south west)--([xshift=1.8cm,yshift=3.6cm]n.south west)--([xshift=1.8cm,yshift=2.4cm]n.south west)--([xshift=1.2cm,yshift=2.4cm]n.south west)--([xshift=1.2cm,yshift=1.8cm]n.south west)--([xshift=0.6cm,yshift=1.8cm]n.south west)--([xshift=0.6cm,yshift=1.2cm]n.south west)--([xshift=0cm,yshift=1.2cm]n.south west)--([xshift=0cm,yshift=0cm]n.south west)
;
\end{tikzpicture}
$$

\end{ex}

\bibliography{references}{}
\bibliographystyle{alpha}

\noindent Institut für Algebra und Geometrie, Otto-von-Guericke-Universität Magdeburg, Germany\\
E-mail: \href{khanh.mathematic@gmail.com}{khanh.mathematic@gmail.com} \\

\noindent Faculty of Mathematics and Computer Science, University of Da Lat, 1 Phu Dong Thien Vuong, Da Lat City, Lam Dong, Vietnam\\
E-mail: \href{hiepdt@dlu.edu.vn}{hiepdt@dlu.edu.vn} \\

\noindent Decentralized Applied Crypto Laboratory, University of Science, VNU-HCMC, Ho Chi Minh City, Vietnam \\
E-mail: \href{tranhason1705@gmail.com}{tranhason1705@gmail.com} \\

\noindent Institute of Mathematics, Vietnam Academy of Science and Technology,
18 Hoang Quoc Viet Road, Cau Giay District, Hanoi, Vietnam\\
E-mail: \href{cbl.dolehaithuy@gmail.com}{cbl.dolehaithuy@gmail.com}

\end{document}